\documentclass[11pt]{article}
\usepackage{amsmath,amssymb,amsthm,comment}
\usepackage{amsfonts}
\usepackage{color}
\usepackage{epsfig}
\parindent0mm
\textwidth170mm
\textheight210mm
\oddsidemargin-5mm
\evensidemargin-5mm
\newtheorem{theo}{Theorem}[section]
\newtheorem{lem}[theo]{Lemma}

\newtheorem{rem}[theo]{Remark}
\newtheorem{prop}[theo]{Proposition}

\makeatletter
    
    \@addtoreset{equation}{section}
  \makeatother
%

%
\newcommand{\nn}{\nonumber}
\newcommand{\io}{\int_0^1}

\newcommand{\dx}{\partial_x}
\newcommand{\dt}{\partial_t}
\begin{document}

\title{Boundedness of solutions to the critical fully parabolic quasilinear one-dimensional Keller-Segel system}

\author{Bartosz Bieganowski\\
{\small Nicolaus Copernicus University, Faculty of Mathematics and Computer Science, ul. Chopina 12/18,}\\
{\small  87-100 Toru\'n, Poland, e-mail: bartoszb@mat.umk.pl,}\\
Tomasz Cie\'slak  \\
{\small Institute of Mathematics, Polish Academy of Sciences, \'Sniadeckich 8, 00-656 Warsaw, Poland,}\\
{\small e-mail: cieslak@impan.pl,}\\
Kentarou Fujie\\
{\small Department of Mathematics, Tokyo University of Science, Tokyo, 162-8601, Japan,}\\
{\small e-mail: fujie@rs.tus.ac.jp}\\
Takasi Senba\\
{\small Faculty of Science, Fukuoka University, Fukuoka, 814-0180, Japan,}\\
{\small e-mail: senba@fukuoka-u.ac.jp}}
\maketitle
\date{}

\begin{abstract}
  \noindent
  In this paper we consider a one-dimensional fully parabolic quasilinear Keller-Segel system with critical
  nonlinear diffusion. We show uniform-in-time boundedness of solutions, which means, that unlike in higher dimensions, there is no critical mass phenomenon in the case of critical diffusion. 
  To this end we utilize estimates from a well-known Lyapunov functional and a recently introduced new Lyapunov-like functional in \cite{CF1}. 

  \noindent
  {\bf Key words:} boundedness of solutions, chemotaxis, Lyapunov-like functional \\
  {\bf MSC 2010:} 35B45, 35K45, 92C17. \\
\end{abstract}
\section{Introduction}\label{section1}

In the present paper we address the problem of boundedness of solutions to the one-dimensional fully parabolic quasilinear critical Keller-Segel system. It has been known that its parabolic-elliptic J\"{a}ger-Luckhaus type version, unlike in higher dimensions, does not admit the critical mass phenomenon (\cite{cl3}), i.e. in the critical diffusion case $a(u)=(1+u)^{-1}$, independently on the magnitude of initial mass, solutions stay bounded. Such a diffusion is a particular case of nonlinear diffusion $a(u)=(1+u)^{1-2/n}$ which leads to the critical mass distinguishing between global bounded solutions and blowing up ones in $n$-dimensional domains, see \cite{CF1} and references therein for the parabolic-elliptic case, in the fully parabolic case one should consult \cite{MW,TCCS,NSY,tao_winkler,lau_miz}. Let us mention here that in the parabolic-elliptic case the lack of critical mass in the critical one-dimensional non-local diffusion is shown in \cite{bg}.

The parabolic-elliptic surprising result in \cite{cl3} is based on peculiar change of variables reducing the problem to a single parabolic equation which is very sensitive to the structure of the problem, namely it cannot be extended beyond the parabolic-elliptic formulation, it apparently works only in the J\"{a}ger-Luckhaus type form of the Keller-Segel system. 

The question present in the literature was what is the situation in the fully parabolic case. It was known that the subcritical diffusion leads to global existence of solutions independently of the magnitude of initial mass, see \cite{BCMR}, while finite-time blowups occur (at least for large masses) in the supercritical case, see \cite{TCPhL}. Recently, second and third authors in \cite{CF1} have shown that in the fully parabolic critical diffusion case solutions exist globally for any initial mass. To this end a new Lyapunov-like functional was introduced. It was up to now not clear whether the critical mass phenomenon appears in the one-dimensional fully parabolic system in a slightly weaker form, namely whether there exists a threshold value of mass distinguishing between global-in-time bounded solutions and global-in-time unbounded ones. Such a situation is present in the critical two-dimensional quasilinear Keller-Segel type volume filling system, see \cite[Theorem 4.2]{TCCS}.

In the present paper we show that in the critical fully parabolic one-dimensional Keller-Segel system any solution stays bounded. This answers in a negative way a question concerning existence of a critical mass distinguishing between bounded and unbounded (though still global-in-time) solutions. Actually, our result is slightly more general and extends boundedness result to a wider class of nonlinear diffusions and showing the boundedness in the case where global existence was already shown in \cite{CF1}. It extends the parabolic-elliptic result in \cite{TCPhL1}. Finally, we also establish a similar result in a parabolic-elliptic classical version (global existence was shown already in \cite{CF2}). 

Let us formulate our result in a precise way.           

We consider the following parabolic-parabolic Keller-Segel system of equations
\begin{eqnarray}
\label{eq}
\left\{ \begin{array}{ll}
\partial_t u = \partial_x \left( a(u) \partial_x u - u \partial_x v \right), \ & \ \mathrm{in} \ (0,\infty) \times (0,1), \\
\partial_t v = \partial_{x}^2 v -  v + u, \  & \ \mathrm{in} \ (0,\infty) \times (0,1), \\
u(0,x) = u_0 (x), \ v(0,x) = v_0 (x), \ & \ \mathrm{for} \ x \in (0,1).
\end{array} \right.
\end{eqnarray}
We impose the following boundary conditions
$$
\partial_x u = \partial_x v = 0 \quad \quad \mathrm{on} \ (0,\infty) \times \partial (0,1) = (0,\infty) \times \{0, 1\}.
$$

We assume that the positive nonlinearity $a : [0,\infty) \rightarrow (0,\infty)$ satisfies the following conditions:
\begin{enumerate}
\item[(A0)] $a \in C[0,\infty) \cap C^2(0,\infty)$,
\item[(A1)] there is $\alpha > 0$ such that
$$
sa(s) \leq \alpha
$$
for any $s \geq 0$,
\item[(A2)] $\int_1^\infty a(s) \, ds = \infty$, i.e. $a \not\in L^1 (1,\infty)$.
\end{enumerate}

In what follows we will denote
\begin{equation}\label{Lambda}
\Lambda(r):=\int_1^r a(s)ds.
\end{equation}



\begin{rem}\label{Uwaga}
The function $a(s) = 1/(1+s)$ satisfies assumptions (A0)-(A2). 
\end{rem}

Our goal is to show the boundedness of the classical solution $(u,v)$. We will show the following theorem.

\begin{theo}\label{ThMain}
Assume that (A0)-(A2) hold, $(u_0, v_0) \in (W^{1,\infty}(0,1))^2$ is the pair of nonnegative initial data. The solution $(u,v)$ of \eqref{eq} is bounded.
\end{theo}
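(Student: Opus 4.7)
My plan is to combine the two Lyapunov-type structures available for \eqref{eq} with a parabolic bootstrap ending in a Moser iteration. First I would exploit the classical Nagai-type Lyapunov functional
\begin{equation*}
W(u,v)=\io\Phi(u)\,dx-\io uv\,dx+\tfrac{1}{2}\io(v^2+v_x^2)\,dx,\qquad \Phi''(s)=\frac{a(s)}{s},\ \Phi(1)=\Phi'(1)=0,
\end{equation*}
which is non-increasing along smooth solutions. Assumption (A1) gives $\Phi''(s)\le\alpha/s^2$, so $\Phi'$ is bounded and $\Phi$ grows at most linearly; convexity with minimum at $s=1$ forces $\Phi\ge0$. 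Hence $\io\Phi(u)\,dx$ is controlled by the conserved mass $M=\io u_0$, and absorbing $\io uv\,dx\le M\|v\|_\infty\le\tfrac{1}{4}\|v\|_{H^1}^2+CM^2$ via the one-dimensional embedding $H^1(0,1)\hookrightarrow L^\infty(0,1)$, the monotonicity $W(t)\le W(0)$ yields time-uniform bounds $\sup_t\|v(t,\cdot)\|_{H^1(0,1)}<\infty$ and, in particular, $\sup_t\|v(t,\cdot)\|_{L^\infty(0,1)}<\infty$.

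Second, I would invoke the Lyapunov-like functional of \cite{CF1}, which in this critical regime delivers a strictly stronger integral estimate on $u$---naturally an Orlicz-type bound comparable to $L\log L$ involving $\Lambda$ from \eqref{Lambda} (for the prototype $a(s)=1/(1+s)$ this amounts to a uniform estimate of $\io u\ln(1+u)\,dx$). Interpolating this with the $H^1$-bound on $v$ and a one-dimensional Gagliardo--Nirenberg inequality, I would extract $\sup_t\|u(t,\cdot)\|_{L^p(0,1)}<\infty$ for some $p>1$. Linear parabolic $L^p$-regularity applied to the second equation of \eqref{eq} with source $u$, combined with the one-dimensional embedding $W^{2,p}(0,1)\hookrightarrow W^{1,\infty}(0,1)$, then produces the crucial estimate $\sup_t\|v_x(t,\cdot)\|_{L^\infty(0,1)}<\infty$.

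Third, with the chemotactic drift $v_x$ now uniformly bounded, the $u$-equation becomes a degenerate scalar parabolic equation with an $L^\infty$-drift. Testing against $(u-K)_+^{q-1}$ for a sufficiently large truncation level $K$ and iterating $q\to\infty$ in the style of Moser/Alikakos, the chemotactic contribution $(q-1)\io(u-K)_+^{q-1}u_x v_x\,dx$ is absorbed into the coercive term $(q-1)\io(u-K)_+^{q-2}a(u)u_x^2\,dx$ modulo lower-order $L^q$-norms of $(u-K)_+$, thanks to (A1) and the uniform bound on $\|v_x\|_\infty$. Passing to the limit in $q$ delivers the desired $\sup_t\|u(t,\cdot)\|_{L^\infty(0,1)}<\infty$.

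The principal obstacle is the criticality of (A1). A naive $L^p$-test on the $u$-equation, combined with the substitution $v_{xx}=v_t+v-u$, always produces an uncontrollable super-critical term $\io u^{p+1}\,dx$ that is not dominated by the degenerate diffusion $\io u^{p-2}a(u)u_x^2\,dx\le\alpha\io u^{p-3}u_x^2\,dx$. The whole scheme closes only because the Lyapunov-like functional of \cite{CF1} supplies precisely the extra piece of integrability needed to push the problem into a subcritical regime where parabolic regularity makes $v_x$ an $L^\infty$-drift and standard iteration applies; keeping all constants uniform in $t$ through that iteration, without losing the logarithmic margin allowed by (A2), is the most technical part of the argument.
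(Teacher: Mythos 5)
Your first step (monotonicity of the classical Lyapunov functional, linear growth of $\Phi$ from (A1), and the resulting uniform control of $v$) is sound and matches what the paper uses. The genuine gap is in your second step: you assert that the Lyapunov-like functional of \cite{CF1} ``delivers'' a \emph{time-uniform} $L\log L$-type bound on $u$, but you give no mechanism for making that bound uniform in time, and this is precisely the whole difficulty of the theorem. The identity of Lemma \ref{lemma_Lyap1} only yields $\frac{d}{dt}{\cal F}(u)\leq \io \frac{a(u)u(v+\dt v)^2}{4}\,dx$, whose right-hand side is bounded by a constant plus $\frac{\alpha}{2}\io|\dt v|^2\,dx$; integrating in time gives ${\cal F}(u(t))\leq {\cal F}(u(t_0))+Ct+C'$, a bound growing linearly in $t$. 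That is exactly what \cite{CF1} obtained, and it suffices for global existence but not for boundedness. The paper's central new idea --- absent from your proposal --- is to add ${\cal F}$ to both sides of \eqref{wazne} and integrate against the weight $e^{s-t}$, which reduces everything to the exponentially weighted bound $\int_0^t e^{s-t}{\cal F}(u(s))\,ds\leq C$; this in turn requires a new differential inequality for the classical Lyapunov functional in which the cross term $\io a(u)\dx u\cdot \dx v\,dx$ is rewritten through $\Lambda(u)$ and the second equation (Lemmas \ref{lem_keyest} and \ref{lem_Keyest2}). Without some such device your uniform Orlicz bound is unsubstantiated.

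Your third step is also shakier than you suggest, and in any case unnecessary. Even granting $\sup_t\|v_x(t,\cdot)\|_{L^\infty}<\infty$, the Moser iteration does not close in the way you describe: by (A1) one has $a(u)\leq \alpha/u$, so absorbing $(q-1)\io(u-K)_+^{q-1}u_xv_x\,dx$ into the coercive term $(q-1)\io(u-K)_+^{q-2}a(u)u_x^2\,dx$ by Young's inequality produces a factor $1/a(u)\geq u/\alpha$ and hence the supercritical quantity $\io(u-K)_+^{q}\,u\,dx$ --- the very term you name as the ``principal obstacle'' reappears inside your own iteration, and you do not resolve it. The paper sidesteps all of this: once ${\cal F}$ is bounded above uniformly, the lower bound of Lemma \ref{Lem:LowerBoundOfF} gives $\sup_t\io\frac{(a(u))^2}{u}|\dx u|^2\,dx<\infty$, whence $\|\Lambda(u(t,\cdot))\|_{W^{1,1}(0,1)}\leq C$ by Cauchy--Schwarz and mass conservation, so $\Lambda(u)$ is uniformly bounded by the one-dimensional Sobolev embedding, and (A2) then forces $u$ itself to be bounded. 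No $L^p$ bootstrap, no parabolic regularity for $v_x$, and no iteration are needed.
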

\begin{rem}
In view of Remark \ref{Uwaga} we notice that Theorem \ref{ThMain} guarantees that the main announced result holds. Namely 
solution $(u,v)$ of \eqref{eq} with $a(u)=\frac{1}{1+u}$ is bounded uniformly in time.
\end{rem}

Let us briefly describe the structure of the paper and comment on the main new estimate 
which gives enough information to conclude boundedness of solutions. 
In the second section we recall well-known existence results, the classical Lyapunov functional as well 
as the Lyapunov-like functional introduced recently in \cite{CF1}. 
Third section contains a time-independent estimate of the Lyapunov-like functional. 
Though in \cite{CF1} global-in-time existence of solutions is proven, the question of uniform boundedness of solutions has been left unsolved. It is obtained in the present paper by matching the new estimates of the classical Lyapunov functional  with the identity stating the Lyapunov-like functional. It seems to be the main novelty of this paper.  
In the fourth section, we improve the regularity of solution and arrive at $L^\infty$ bound. It is done in a completely different manner than in \cite{CF1}. Much simpler and self-contained one. It seems another advantage of the present approach over the one presented in \cite{CF1}. Finally, in the last section we formulate and discuss the analogous results in an easier parabolic-elliptic case.

\section{Preliminaries}

In the present section we collect some known results concerning the studied problem.

The following global existence result is known, see \cite[Appendix]{CF1}.

\begin{theo}\label{gloex}
Suppose that (A0), (A1), (A2) are satisfied, $(u_0, v_0) \in (W^{1,\infty}(0,1))^2$ is the pair of nonnegative initial data. Then the problem \eqref{eq} has a unique classical positive solution, which exists globally in time. Moreover the solution $(u,v)$ satisfies the mass conservation
$$
M := \int_0^1 u_0 (x) \, dx = \int_0^1 u(t,x) \, dx \quad \mbox{for all} \ t \in (0,\infty).
$$
\end{theo}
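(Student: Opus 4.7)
The plan is to combine the two Lyapunov-type quantities flagged in the introduction, then bootstrap. First I would recall the classical Lyapunov functional
\[
\mathcal{F}(u,v) = \int_0^1 \Phi(u)\,dx - \int_0^1 uv\,dx + \tfrac{1}{2}\int_0^1 \bigl(|\partial_x v|^2 + v^2\bigr)dx,
\]
where $\Phi''(s)=a(s)/s$, whose standard dissipation identity yields
\[
\frac{d}{dt}\mathcal{F}(u,v) = -\int_0^1 u\bigl|\partial_x(\Phi'(u)-v)\bigr|^2 dx - \int_0^1 |\partial_t v|^2 dx .
\]
Monotonicity of $\mathcal{F}$ together with $u$-mass conservation (Theorem~\ref{gloex}) produces baseline time-uniform bounds on $\int_0^1 \Phi(u)\,dx$ and on $\|v(t)\|_{H^1}$, and a finite total dissipation budget in $L^1_t$. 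Under (A1) one has $\Phi'(s)$ bounded, so these bounds alone are at the ``critical'' level and do not yet yield $L^\infty$ control.

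Second, I would invoke the Lyapunov-like functional from \cite{CF1}, built around $\Lambda(u)$ defined in \eqref{Lambda}. The key step in Section 3 should be to write the time derivative of that functional in a form that can be matched against the $\mathcal{F}$-dissipation: after integration by parts and use of the second equation of \eqref{eq} to rewrite $\partial_x^2 v = \partial_t v + v - u$, cross terms of the type $\int_0^1 \partial_x \Lambda(u)\,\partial_x v\,dx$ should be absorbed by the good quadratic term $\int u|\partial_x(\Phi'(u)-v)|^2$ in $\mathcal{F}'$. The assumption (A1) enters here to control the weight $a(s)/s$ appearing when comparing $\partial_x \Phi'(u) = (1/u)\,\partial_x \Lambda(u)$ with $\partial_x \Lambda(u)$ itself. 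Integrating in time and using the finite dissipation should give a time-independent bound on a quantity of the form $\|\Lambda(u(t))\|_{H^1}^2$ (or an analogue with $\int \Lambda(u)^2\,dx$ plus a space-time integral of $|\partial_x \Lambda(u)|^2$).

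Third, in Section 4, I would convert this uniform-in-time bound on $\Lambda(u)$ into the desired $L^\infty$ bound on $u$. The one-dimensional Sobolev embedding $H^1(0,1)\hookrightarrow L^\infty(0,1)$ gives $\sup_t \|\Lambda(u(t))\|_{L^\infty}<\infty$; since (A2) forces $\Lambda(r)\to\infty$ as $r\to\infty$ and $\Lambda$ is strictly increasing (as $a>0$), inverting $\Lambda$ produces the pointwise bound $\|u(t)\|_{L^\infty}\le C$. Boundedness of $v$ in a higher norm then follows by parabolic smoothing applied to $\partial_t v -\partial_x^2 v + v = u$ with the now-bounded source $u$.

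The main obstacle will be the matching step in the second paragraph. The CF1 functional was designed only to prevent finite-time blow-up, and its time derivative does not manifestly have a definite sign, nor is it obviously bounded by the $\mathcal{F}$-dissipation. The crux is to produce the correct algebraic rearrangement, presumably showing that the combination of $\mathcal{F}$-dissipation and the CF1 identity can be written as a perfect square (plus terms controlled by the already-known $H^1$ bound on $v$), so that time-integration delivers a time-independent estimate rather than just local-in-time control. The critical assumption (A1) should be exactly what makes this algebraic cancellation close, paralleling the role played by the analogous assumption in the parabolic-elliptic work \cite{TCPhL1}.
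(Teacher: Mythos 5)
There is a genuine mismatch between what you set out to prove and what the statement actually asserts. Theorem~\ref{gloex} is the local-to-global well-posedness result: existence of a unique classical \emph{positive} solution, its global extensibility, and mass conservation. The paper does not prove it at all --- it is quoted from the Appendix of \cite{CF1}, where it rests on a local existence theory (construction of a classical solution on a maximal interval $[0,T_{\mathrm{max}})$ via standard quasilinear parabolic theory, together with an extensibility criterion), on the comparison/maximum principle for positivity, and on integrating the first equation of \eqref{eq} against $1$ with the Neumann boundary conditions to get mass conservation. None of these ingredients appears in your proposal: you never construct a local solution, never address uniqueness or positivity, and never derive the conservation of mass --- in fact you \emph{invoke} mass conservation from Theorem~\ref{gloex} in your first paragraph, which makes the argument circular as a proof of that very theorem.

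What you have actually sketched is the strategy of the paper's \emph{main} result, Theorem~\ref{ThMain} (uniform-in-time boundedness): combine the dissipation of the classical Lyapunov functional with the Lyapunov-like identity of \cite{CF1} to obtain a time-independent bound on $\|\Lambda(u(t))\|_{W^{1,1}}$, then use the one-dimensional embedding and (A2) to conclude $\|u(t)\|_{L^\infty}\le C$. That outline is broadly consistent with Sections 3 and 4 of the paper (with the technical caveat that the paper does not obtain the matching as a ``perfect square'' absorption into the $L$-dissipation; instead it adds ${\cal F}$, respectively $L$, to their own evolution identities and exploits exponentially weighted time integrals, cf.\ Lemmas~\ref{lem_Keyest2} and~\ref{Lem:UpperBoundOfF}). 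But even granting all of that, an $L^\infty$ a priori bound only yields global existence once a local solution and a blow-up alternative are in place, and for global existence alone the weaker, time-dependent estimates of \cite{CF1} already suffice. To prove Theorem~\ref{gloex} as stated you would need to supply the local well-posedness, positivity, uniqueness, and mass conservation arguments that your proposal omits.
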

Since the obtained solution is regular, applying the strong maximum principle and the Hopf principle to the upper equation of \eqref{eq}, due to
the zero Neumann boundary condition, we know that, unless $u_0(x)\equiv 0$, for $0<t<\infty$ the following holds:
\begin{equation}\label{odciecie}
u(t,x)>0 \;\;\mbox{for all}\;\;x\in[0,1]. 
\end{equation}

By the semigroup theory we can easily get the following regularity estimate.

\begin{lem}\label{oszv}
There is a constant $C > 0$, which depends on $M$, $p \in [1,\infty]$ and $\|v_0\|_{L^p (0,1)}$ such that
$$
\sup_{t \geq 0} \| v (t,\cdot) \|_{L^p (0,1)} \leq C.
$$
\end{lem}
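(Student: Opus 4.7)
The plan is to write $v$ via the variation-of-constants (Duhamel) formula for the generator $A := \partial_x^2 - \mathrm{Id}$ of the strongly continuous semigroup $\{e^{tA}\}_{t\geq 0}$ on $L^q(0,1)$ with homogeneous Neumann boundary conditions, and then apply standard $L^q$-$L^p$ smoothing estimates. Concretely, since $v$ solves $\partial_t v = \partial_x^2 v - v + u$ with $\partial_x v|_{\{0,1\}} = 0$, one has
$$
v(t,\cdot) = e^{tA} v_0 + \int_0^t e^{(t-s)A} u(s,\cdot)\, ds.
$$

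The classical smoothing estimates for the Neumann heat semigroup on $(0,1)$, combined with the exponential decay produced by the $-\mathrm{Id}$ in $A$, yield for any $1 \leq q \leq p \leq \infty$ a constant $C = C(p,q) > 0$ such that
$$
\|e^{tA} w\|_{L^p(0,1)} \leq C\Bigl(1 + t^{-\frac{1}{2}\bigl(\frac{1}{q}-\frac{1}{p}\bigr)}\Bigr) e^{-t} \|w\|_{L^q(0,1)}, \qquad t>0.
$$
I would derive or cite this by splitting $w$ into its mean $\bar w$ (preserved by $e^{t\partial_x^2}$) and its oscillating part (decaying at rate $e^{-\pi^2 t}$ by Poincar\'e), and by using the standard kernel estimate for small $t$.

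Applying the estimate with $q = p$ to the first term and with $q = 1$ to the integrand, and invoking the mass conservation $\|u(s,\cdot)\|_{L^1} = M$ from Theorem~\ref{gloex}, one obtains
$$
\|v(t,\cdot)\|_{L^p} \leq C e^{-t}\|v_0\|_{L^p} + CM \int_0^t \Bigl(1 + (t-s)^{-\frac{1}{2}\bigl(1-\frac{1}{p}\bigr)}\Bigr) e^{-(t-s)}\, ds.
$$
After the substitution $\tau = t-s$, the integral is dominated by $\int_0^\infty (1 + \tau^{-\sigma}) e^{-\tau}\, d\tau$ with $\sigma = \tfrac{1}{2}\bigl(1-\tfrac{1}{p}\bigr) \leq \tfrac{1}{2} < 1$, which is finite independently of $t$. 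This gives the desired time-uniform bound depending only on $M$, $p$, and $\|v_0\|_{L^p}$.

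I do not foresee a real obstacle; the only mild subtlety is stating the $L^q$-$L^p$ smoothing estimate in the correct form for the Neumann semigroup on an interval. The argument uses only mass conservation and the linearity of the $v$-equation, so no feature of the quasilinear structure of the first equation in \eqref{eq} enters.
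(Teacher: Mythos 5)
Your proof is correct and is precisely the argument the paper has in mind: the paper offers no details beyond the remark that the lemma follows ``by the semigroup theory,'' and your Duhamel representation with $L^q$--$L^p$ smoothing for $e^{t(\partial_x^2-1)}$, mass conservation for $\|u(s,\cdot)\|_{L^1}$, and the integrability of $(1+\tau^{-\sigma})e^{-\tau}$ with $\sigma\leq \tfrac12$ is the standard way to fill that in.
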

In particular, we know that $v$ is bounded, i.e. $v \in L^\infty ([0,\infty), L^\infty (0,1))$.

Let us also recall Lyapunov functional associated with the problem, a new Lyapunov-like functional introduced in \cite{CF1} as well as some estimates appearing as a consequence. 
The classical Lyapunov functional associated with \eqref{eq} is
$$
L(u,v) := \int_0^1 b(u) \, dx - \int_0^1 uv \, dx + \frac{1}{2} \|v\|^2_{H^1(0,1)},
$$
where $b \in C^2(0,\infty)$ satisfies $b''(r)= \frac{a(r)}{r}$ for $r > 0$ and $b(1)=b'(1)=0$.
It satisfies, see \cite{TCPhL},
$$
\frac{d}{dt} L(u,v) = -\int_0^1 |\partial_t v|^2 \, dx - \int_0^1 u \left| \frac{a(u)}{u} \partial_x u - \partial_x v \right|^2 \, dx.
$$
 Moreover, boundedness of $v$ and mass conservation yield that $L$ is bounded from below:
\begin{equation}\label{lowerbound_L}
|L(u(t),v(t))| \leq C \quad \mbox{for all }t>0
\end{equation} 
with some $C$, hence
\begin{equation}\label{v-t}
\int_0^\infty \int_0^1 |\partial_t v|^2 \, dx \, dt < \infty. 
\end{equation}
Again, for the proofs we refer to \cite{TCPhL, CF1}.

Let us recall the crucial new Lyapunov-like functional which was found in \cite{CF1} and
yields an estimate sufficient to prove global existence result stated in Theorem \ref{gloex}.
Moreover, we are going to utilize it when proving boundedness of solutions.

For ${\cal F}$ and ${\cal D}$ defined as follows
\begin{equation}\label{F}
{\cal F} (u) := \frac{1}{2} \int_0^1 \frac{(a(u))^2}{u} | \partial_x u|^2 \, dx - \int_0^1 u \int_1^u a(r) \, dr \, dx
\end{equation}
and
$$
{\cal D} (u,v) := \int_0^1 ua(u) \left| \partial_x \left( \frac{a(u)}{u} \partial_x u \right) - \partial_x^2 v + \frac{(v+\partial_t v)}{2} \right|^2 \, dx,
$$
the following claim was proven in \cite[Lemma 3.3]{CF1}.

\begin{lem}\label{lemma_Lyap1}
Let $(u,v)$ be a solution of \eqref{eq} in $(0,T)\times (0,1)$ and let $T$ be the maximal existence time of the classical solution. 
Then the following identity holds:
\begin{equation}\label{wazne}
\frac{d}{dt} {\cal F} (u) + {\cal D}(u,v) = \int_0^1 \frac{a(u)u (v + \partial_t v)^2}{4} \, dx.
\end{equation}
\end{lem}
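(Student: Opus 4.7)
\emph{Proof plan.} The strategy is to compute $\frac{d}{dt}\mathcal{F}(u)$ directly, then to \emph{complete the square} in the resulting expression so that the dissipation $\mathcal{D}(u,v)$ appears on one side and the right-hand side of \eqref{wazne} on the other. To compress notation, set $\phi := \frac{a(u)}{u}\partial_x u$ and $w := \phi - \partial_x v$. The first equation of \eqref{eq} then reads $\partial_t u = \partial_x(uw)$, while the no-flux boundary conditions force $\phi|_{\partial(0,1)} = 0$, hence also $w|_{\partial(0,1)} = 0$ and $uw|_{\partial(0,1)} = 0$. All integrations by parts used below are therefore free of boundary contributions.

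First, differentiate $\mathcal{F}(u)$ under the integral sign; integration by parts in the kinetic term $\tfrac{1}{2}\int_0^1 u\phi^2\,dx$ (whose boundary contribution vanishes because $\phi=0$ at the endpoints) produces
$$
\frac{d}{dt}\mathcal{F}(u) \;=\; \int_0^1 K\,\partial_t u\,dx,
\qquad
K \;:=\; -\tfrac{1}{2}\phi^2 \;-\; a(u)\,\partial_x\phi \;-\; \Lambda(u) \;-\; u\,a(u).
$$
Substituting $\partial_t u = \partial_x(uw)$ and integrating by parts once more gives $\frac{d}{dt}\mathcal{F}(u) = -\int_0^1 (\partial_x K)\,uw\,dx$. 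The main obstacle is the algebraic identity needed at this stage: using $u\phi = a(u)\,\partial_x u$, one must verify that the five terms arising from $\partial_x K$, once multiplied by $u$, collapse into a single exact derivative,
$$
-\,u\,\partial_x K \;=\; \partial_x\!\bigl[u\,a(u)\,(\partial_x\phi + u)\bigr].
$$
Granting this, one more integration by parts (with boundary term killed by $w|_{\partial(0,1)}=0$) yields
$$
\frac{d}{dt}\mathcal{F}(u) \;=\; -\int_0^1 u\,a(u)\,(\partial_x\phi + u)\,\partial_x w\,dx.
$$

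Finally, the second equation of \eqref{eq}, written in the form $v + \partial_t v = \partial_x^2 v + u$, allows one to rewrite $\partial_x\phi + u = \partial_x w + (v + \partial_t v)$. Completing the square,
$$
(\partial_x w + v + \partial_t v)\,\partial_x w
\;=\; \left(\partial_x w + \tfrac{v + \partial_t v}{2}\right)^{\!2} \;-\; \tfrac{(v + \partial_t v)^2}{4},
$$
and observing that $\partial_x w = \partial_x\phi - \partial_x^2 v$, the squared term is precisely the integrand of $\mathcal{D}(u,v)$ divided by $u\,a(u)$. Multiplying by $u\,a(u)$ and integrating over $(0,1)$ rearranges the previous display into \eqref{wazne}.
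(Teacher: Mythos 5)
Your derivation is correct: I checked the variational derivative $K=-\tfrac12\phi^2-a(u)\partial_x\phi-\Lambda(u)-ua(u)$, the key collapse $-u\,\partial_x K=\partial_x\bigl[ua(u)(\partial_x\phi+u)\bigr]$ (which you assert rather than verify, but which does hold once $u\phi=a(u)\partial_x u$ is substituted), and the completion of the square, and all boundary terms vanish by the Neumann conditions together with the positivity \eqref{odciecie}. The paper itself does not reprove this lemma but quotes it from \cite[Lemma 3.3]{CF1}, and your argument is essentially the standard derivation given there, so nothing further is needed.
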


\section{Main estimates}

This section is devoted to the main estimate, the time-independent estimate of ${\cal F}$, for the definition see \eqref{F} before Lemma \ref{lemma_Lyap1}. 
In \cite{CF1} the authors invoked Lemma \ref{oszv} and \eqref{v-t} to obtain a bound of the right hand side of \eqref{wazne} which enabled them to arrive at the inequality $\frac{d}{dt} {\cal F} \leq C$. However, such an inequality yields a time-dependent estimate of ${\cal F}$. 
The new idea of the present paper is to add ${\cal F}$ to \eqref{wazne} which gives
$$
\frac{d}{dt} {\cal F} (u)+{\cal F} (u) + {\cal D}(u,v) = \int_0^1 \frac{a(u)u (v + \partial_t v)^2}{4} \, dx
+{\cal F} (u).
$$   
Hence we infer the bound on ${\cal F}$  
$$
{\cal F} (u(t)) \leq {\cal F} (u(t_0)) + \int_{t_0}^t e^{s-t} \int_0^1 \frac{a(u)u (v + \partial_t v)^2}{4} \, dxds
+\int_{t_0}^t e^{s-t} {\cal F} (u)\,ds,
$$
which is a time-independent one provided the time-weighted estimates of the right hand side of the above inequality hold. Although the bound of the second term on the right hand side is straightforward, it seems non-trivial to find a time-independent estimate of the last term. 
We derive the latter utilizing the classical Lyapunov functional and several new estimates listed below.

\begin{prop}\label{lem_est_Lambda}
Assume (A0) and (A1) are satisfied. 
Let $(u,v)$ be a solution of \eqref{eq}. 
Then there exists some $C>0$ such that for any $t>0$
\begin{equation*}
 \io |\Lambda(u)|^2\,dx \leq C\io (u+1)\,dx, 
\end{equation*}
where $\Lambda$ is given by \eqref{Lambda}. 
\end{prop}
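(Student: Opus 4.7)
The plan is to reduce the claim to a pointwise inequality $|\Lambda(r)|^2 \leq C(r+1)$ for all $r \geq 0$, after which integration and mass conservation finish the job immediately. The only inputs needed are (A0) and (A1); (A2) plays no role.

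First, I would split $\Lambda$ according to whether the argument lies below or above $1$. On $[0,1]$, assumption (A0) gives that $a$ is continuous, hence bounded by some $M_0 := \max_{[0,1]} a$, so
\[
|\Lambda(r)| = \left| \int_r^1 a(s)\,ds \right| \leq M_0 \quad \text{for all } r \in [0,1].
\]
On $[1,\infty)$, I would use (A1), which, for $s>0$, can be rewritten as $a(s) \leq \alpha/s$. Integrating gives
\[
0 \leq \Lambda(r) = \int_1^r a(s)\,ds \leq \alpha \int_1^r \frac{ds}{s} = \alpha \log r \quad \text{for all } r \geq 1.
\]
Since $(\log r)^2/r \to 0$ as $r \to \infty$, there is a constant $C_1$ with $(\log r)^2 \leq C_1 r$ on $[1,\infty)$. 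Combining the two regimes yields the pointwise bound $|\Lambda(r)|^2 \leq M_0^2 + \alpha^2 C_1 r \leq C(1+r)$ valid for every $r \geq 0$.

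Finally, substituting $r = u(t,x)$ and integrating over $(0,1)$ gives
\[
\int_0^1 |\Lambda(u)|^2\,dx \leq C \int_0^1 (u+1)\,dx,
\]
which is the claim (and, by mass conservation from Theorem \ref{gloex}, a uniform-in-time bound by $C(M+1)$). The argument has no real obstacle: the only mildly non-obvious point is recognising that the logarithmic growth of $\Lambda$ supplied by (A1) is controlled by $u$ itself, so that the right-hand side stays linear in $u$ rather than requiring some superlinear quantity.
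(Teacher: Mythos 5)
Your proof is correct and follows essentially the same route as the paper: split at $r=1$, bound $\Lambda$ on $[0,1]$ by continuity of $a$ from (A0), and on $[1,\infty)$ use (A1) to get $\Lambda(r)\le\alpha\log r$ together with $(\log r)^2\le C_1 r$ (the paper just makes the constant explicit as $4/e^2$). Nothing further to add.
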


\begin{proof}
First, by (A0) there exists a constant $C>0$ such that for $r \in (0,1)$,
\begin{equation*}
 \left|\int_1^r a(s) \, ds \right|^2 \leq C.
\end{equation*}
Next, by (A1), the inequality $(\log x)^2 \leq \frac{4}{e^2}x $ for $x\geq 1$, it follows for $r\geq 1$,
\begin{equation*}
\left| \int_1^r a(s)\,ds \right|^2
 \leq \left|  \int_1^r \frac{\alpha}{s}\,ds \right|^2
  = \alpha^2 (\log r)^2
  \leq \frac{4\alpha^2}{e^2}r
\end{equation*}
and then by combining the above inequalities we arrive at the desired inequality.
\end{proof}

Observe that in view of the continuous embedding $L^2 (0,1) \subset L^1 (0,1)$, the mass conservation and Proposition \ref{lem_est_Lambda} we get
\begin{equation}\label{pr2}
\int_0^1 |\Lambda(u)| \, dx = \int_0^1 \left|\int_1^u a(s) \, ds\right| \,dx \leq C (M+1),
\end{equation} 
where $C > 0$. 

\begin{prop}\label{lem_est_uLambda}
Assume (A0) and (A1) are satisfied. 
Let $(u,v)$ be a solution of \eqref{eq}. 
Then there exists some $c>0$ such that
\begin{equation*}
\int_0^1 u \int_1^u a(r) \, dr \, dx 
\leq
c M (M+1) + M^{3/2} \left\| \frac{a(u)}{\sqrt{u}} \dx u \right\|_{L^2(0,1)}. 
\end{equation*}
\end{prop}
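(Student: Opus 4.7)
I would begin by rewriting the left-hand side as $\int_0^1 u\,\Lambda(u)\,dx$ with $\Lambda$ as in \eqref{Lambda}, and splitting it against the reference value $\Lambda(M)$:
\begin{equation*}
\int_0^1 u\,\Lambda(u)\,dx = \int_0^1 u\bigl(\Lambda(u)-\Lambda(M)\bigr)\,dx + M\,\Lambda(M).
\end{equation*}
The first summand should contribute the term involving $\|\frac{a(u)}{\sqrt{u}}\partial_x u\|_{L^2}$, while the second is a purely algebraic quantity depending only on $M$, to be handled via (A0)-(A1).

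For the first summand, continuity of $u(t,\cdot)$ (supplied by the classical-solution framework of Theorem \ref{gloex}) together with $\int_0^1 u\,dx=M$ on the unit interval yields, via the mean-value theorem for integrals, some $x_0\in[0,1]$ with $u(x_0)=M$. The fundamental theorem of calculus then gives, for every $x\in[0,1]$,
\begin{equation*}
\Lambda(u(x))-\Lambda(M)=\int_{x_0}^x a(u(y))\,\partial_y u(y)\,dy,
\end{equation*}
and Cauchy--Schwarz applied to the factorization $a(u)|\partial_y u|=\frac{a(u)}{\sqrt{u}}|\partial_y u|\cdot\sqrt{u}$ (which is legitimate by the positivity \eqref{odciecie}) produces the uniform-in-$x$ pointwise bound
\begin{equation*}
|\Lambda(u(x))-\Lambda(M)|\leq \sqrt{M}\,\left\|\frac{a(u)}{\sqrt{u}}\partial_x u\right\|_{L^2(0,1)}.
\end{equation*}
Multiplying by $u$ and integrating over $(0,1)$ produces exactly the term $M^{3/2}\|\frac{a(u)}{\sqrt{u}}\partial_x u\|_{L^2(0,1)}$.

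To close the argument I would estimate $M|\Lambda(M)|$ by means of (A1) in the form $a(s)\leq\alpha/s$: for $M\geq 1$ this yields $|\Lambda(M)|\leq \alpha\log M\leq \alpha M$, while for $M<1$ the continuity of $a$ on $[0,1]$ from (A0) bounds $|\Lambda(M)|$ by the constant $\max_{[0,1]}a$. In either case $|\Lambda(M)|\leq c(M+1)$ with some $c>0$, so that $M|\Lambda(M)|\leq cM(M+1)$, and combining with the above inequality yields the claim. I do not anticipate a serious obstacle; the only mild subtleties are the existence of a reference point where $u$ attains its average (needing only continuity of the classical solution) and the correct weighting $\sqrt{u}/\sqrt{u}$ in Cauchy--Schwarz so that precisely the $\mathcal{F}$-type integrand $\frac{a(u)^2}{u}|\partial_x u|^2$ emerges on the right-hand side.
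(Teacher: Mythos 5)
Your argument is correct and delivers the stated inequality with exactly the right constants. The core mechanism is the same as the paper's — both proofs control $\|\Lambda(u)\|_{L^\infty(0,1)}$ through its derivative $\partial_x\Lambda(u)=a(u)\partial_x u$, split as $\frac{a(u)}{\sqrt{u}}|\partial_x u|\cdot\sqrt{u}$ and estimated by Cauchy--Schwarz against the mass — but you anchor the $L^\infty$ bound differently. The paper uses the embedding $W^{1,1}(0,1)\hookrightarrow L^\infty(0,1)$ and controls the $L^1$ part of the norm via \eqref{pr2}, which in turn rests on Proposition \ref{lem_est_Lambda} and the inequality $(\log x)^2\le \tfrac{4}{e^2}x$. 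You instead write $|\Lambda(u(x))|\le|\Lambda(M)|+\int_0^1|a(u)\partial_y u|\,dy$ by choosing a point $x_0$ where the continuous function $u(t,\cdot)$ attains its mean value $M$, and then bound $|\Lambda(M)|\le c(M+1)$ directly from (A0) and (A1). This makes your proof self-contained (it bypasses Proposition \ref{lem_est_Lambda} entirely for this purpose), at the cost of invoking the mean-value/intermediate-value argument to produce $x_0$; the paper's route recycles an estimate it needs elsewhere anyway. Both are sound; the only implicit hypotheses you share with the paper are the strict positivity \eqref{odciecie} (needed to divide by $\sqrt{u}$), which excludes the trivial case $u_0\equiv 0$.
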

\begin{proof}
Note that mass conservation, \eqref{pr2} and one-dimensional Sobolev embeddings yield
\begin{align*}
&\quad  \int_0^1 u \int_1^u a(r) \, dr \, dx 
\leq  M \left\| \int_1^u a(r) \, dr \right\|_{L^\infty(0,1)}
 \leq  M \left\| \int_1^u a(r) \, dr \right\|_{W^{1,1}(0,1)} \\
&=  M \left\| \int_1^u a(r) \, dr \right\|_{L^1 (0,1)} 
+ M \int_0^1 \left| \partial_x \left( \int_1^u a(r) \, dr \right) \right| \, dx 
\leq  c M (M+1) + M \int_0^1 \sqrt{u} \frac{a(u) |\partial_x u|}{\sqrt{u}} \, dx \\
&\leq c M (M+1) + M^{3/2} \left( \int_0^1 \frac{(a(u))^2}{u} |\partial_x u|^2 \, dx \right)^{1/2}
\end{align*}
with some $c>0$.
\end{proof}

\begin{lem}\label{lem_keyest}
Assume (A0) and (A1) are satisfied. 
Let $(u,v)$ be a solution of \eqref{eq}. 
Then there exists some $C=C(M, \|v_0\|_{L^2(0,1)})>0$ such that
\begin{equation*}
\io a(u) \dx u \cdot \dx v \,dx \leq 
C(M, \|v_0\|_{L^2(0,1)}) \left(\|\dt v\|_{L^2(0,1)} + \left\| \frac{a(u)}{\sqrt{u}} \dx u \right\|_{L^2(0,1)}+1  \right).
\end{equation*}
\end{lem}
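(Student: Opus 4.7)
The plan is to recognize that $a(u)\dx u = \dx \Lambda(u)$, integrate by parts (using the Neumann condition for $v$), and then substitute the second equation of \eqref{eq} for $\dx^2 v$. Since $\dx v$ vanishes on the boundary,
\begin{equation*}
\io a(u)\,\dx u\,\dx v\,dx = \io \dx \Lambda(u)\,\dx v\,dx = -\io \Lambda(u)\,\dx^2 v\,dx.
\end{equation*}
Substituting $\dx^2 v = \dt v + v - u$ from the second equation of \eqref{eq} yields
\begin{equation*}
\io a(u)\,\dx u\,\dx v\,dx = -\io \Lambda(u)\,\dt v\,dx - \io \Lambda(u)\,v\,dx + \io u\,\Lambda(u)\,dx,
\end{equation*}
reducing the problem to estimating these three terms separately.

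For the first two terms I would apply Cauchy-Schwarz. Proposition \ref{lem_est_Lambda} gives $\|\Lambda(u)\|_{L^2(0,1)} \leq C(M)$, hence
\begin{equation*}
\left|\io \Lambda(u)\,\dt v\,dx\right| \leq C(M)\,\|\dt v\|_{L^2(0,1)},
\end{equation*}
while Lemma \ref{oszv} with $p=2$ yields $\|v(t,\cdot)\|_{L^2(0,1)} \leq C(M,\|v_0\|_{L^2(0,1)})$, so
\begin{equation*}
\left|\io \Lambda(u)\,v\,dx\right| \leq C(M)\cdot C(M,\|v_0\|_{L^2(0,1)}).
\end{equation*}
The third term $\io u\,\Lambda(u)\,dx$ is controlled directly by Proposition \ref{lem_est_uLambda}, which contributes exactly the term $M^{3/2}\|\frac{a(u)}{\sqrt{u}}\dx u\|_{L^2(0,1)}$ appearing in the desired inequality. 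Summing the three bounds produces the claim.

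The essential (and really only nontrivial) step is the initial algebraic reformulation $a(u)\dx u = \dx \Lambda(u)$ together with the use of the second equation of the system to replace $\dx^2 v$ by $\dt v + v - u$; this is precisely what couples Propositions \ref{lem_est_Lambda} and \ref{lem_est_uLambda} with the unknown $v$ and delivers the three-term split that matches the right-hand side of the lemma. The sign-indefiniteness of $\Lambda(u)$ (it is negative where $u<1$) is inconsequential, since Cauchy-Schwarz handles absolute values and Proposition \ref{lem_est_uLambda} already provides an upper bound on $\io u\,\Lambda(u)\,dx$ irrespective of the sign of $\Lambda(u)$.
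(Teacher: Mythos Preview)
Your proof is correct and follows essentially the same route as the paper: integrate by parts to get $-\io \Lambda(u)\,\dx^2 v\,dx$, substitute $\dx^2 v = \dt v + v - u$, and bound the resulting three terms via Proposition~\ref{lem_est_Lambda} (with Cauchy--Schwarz), Lemma~\ref{oszv}, and Proposition~\ref{lem_est_uLambda}, respectively. Your closing remark on the sign of $\Lambda(u)$ is a helpful clarification but not needed for the argument.
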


\begin{proof}
By the integration by parts and the second equation of \eqref{eq}  it follows that
\begin{align}\label{ineq1}
\nn
\io a(u) \dx u \cdot \dx v \,dx 
=&  \io \dx \Lambda (u) \cdot \dx v \,dx\\
=& - \io \Lambda (u ) \dx^2 v\,dx
= - \io \Lambda (u) (\dt v +v-u)\,dx,
\end{align}
where $\Lambda$ is given by \eqref{Lambda}. 
Next, from the mass conservation and Proposition \ref{lem_est_Lambda} we see that  
\begin{align}\label{ineq2}
\nn
\left|  \io \Lambda (u) \dt v \, dx  \right| 
\leq& \|\Lambda(u)\|_{L^2(0,1)} \|\dt v\|_{L^2(0,1)}\\
\leq& \left( C\io (u+1)\,dx \right)^{\frac{1}{2}}  \|\dt v\|_{L^2(0,1)}
\leq  C \|\dt v\|_{L^2(0,1)}
\end{align}
with some $C(M)>0$. Proceeding the similar way, Lemma \ref{oszv} guarantees that 
\begin{align}\label{ineq3}
\left|  \io \Lambda (u)  v \, dx  \right| 
\leq& \left( C\io (u+1)\,dx \right)^{\frac{1}{2}}  \| v\|_{L^2(0,1)}
\leq  C(M, \|v_0\|_{L^2(0,1)}).
\end{align} 
Finally collecting \eqref{ineq1}, \eqref{ineq2}, \eqref{ineq3} and Proposition \ref{lem_est_uLambda} completes the proof.
\end{proof}

\begin{lem}\label{lem_Keyest2}
Assume (A0) and (A1) are satisfied. 
Let $(u,v)$ be a solution of \eqref{eq}. 
Then there exists some $C(u_0, v_0)>0$ such that for any $t>0$
\begin{equation*}
\int_0^t e^{s-t} \io \left( \frac{(a(u))^2}{u}|\dx u|^2 + u|\dx v|^2 \right) \, dx \, ds \leq C(u_0, v_0).
\end{equation*}
\end{lem}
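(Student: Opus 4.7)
The plan is to derive the bound directly from the dissipation identity for the classical Lyapunov functional $L$, using Lemma \ref{lem_keyest} to absorb the troublesome cross term.

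First I would expand the square in the dissipation term of $L$:
\begin{equation*}
u\left|\frac{a(u)}{u}\dx u-\dx v\right|^2=\frac{(a(u))^2}{u}|\dx u|^2-2a(u)\dx u\cdot \dx v+u|\dx v|^2,
\end{equation*}
so that the identity $\frac{d}{dt}L(u,v)=-\|\dt v\|_{L^2}^2-\int_0^1 u|\frac{a(u)}{u}\dx u-\dx v|^2\,dx$ becomes a relation for $\mathcal{A}(t)+\mathcal{B}(t)$, where I abbreviate $\mathcal{A}(t):=\int_0^1 \frac{(a(u))^2}{u}|\dx u|^2\,dx$ and $\mathcal{B}(t):=\int_0^1 u|\dx v|^2\,dx$. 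Concretely,
\begin{equation*}
\mathcal{A}(t)+\mathcal{B}(t)=-\frac{d}{dt}L(u,v)-\|\dt v\|_{L^2(0,1)}^2+2\io a(u)\dx u\cdot \dx v\,dx.
\end{equation*}

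Next, I would apply Lemma \ref{lem_keyest} to the cross term. It provides
\begin{equation*}
2\io a(u)\dx u\cdot\dx v\,dx\leq 2C\bigl(\|\dt v\|_{L^2(0,1)}+\sqrt{\mathcal{A}(t)}+1\bigr),
\end{equation*}
and then Young's inequality gives $2C\sqrt{\mathcal{A}(t)}\leq \tfrac12 \mathcal{A}(t)+2C^2$ and $2C\|\dt v\|_{L^2(0,1)}\leq \|\dt v\|_{L^2(0,1)}^2+C^2$. Both extra positive terms are absorbed by the left-hand side (resp. the $-\|\dt v\|_{L^2}^2$ on the right), yielding a differential inequality of the form
\begin{equation*}
\tfrac{1}{2}\mathcal{A}(t)+\mathcal{B}(t)\leq -\frac{d}{dt}L(u,v)+K
\end{equation*}
for some constant $K=K(M,\|v_0\|_{L^2})>0$.

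Finally I would multiply by $e^{s-t}$ and integrate over $s\in(0,t)$. The constant contribution yields at most $K$, and the critical term is handled by integration by parts:
\begin{equation*}
-\int_0^t e^{s-t}\frac{d}{ds}L(u(s),v(s))\,ds = -L(u(t),v(t))+e^{-t}L(u_0,v_0)+\int_0^t e^{s-t}L(u(s),v(s))\,ds,
\end{equation*}
which is uniformly bounded in $t$ thanks to the two-sided bound \eqref{lowerbound_L} on $L$ and the initial data. Combining the estimates gives the desired inequality. The main obstacle I anticipate is precisely the absorption of $\sqrt{\mathcal{A}(t)}$ and $\|\dt v\|_{L^2}$ from Lemma \ref{lem_keyest}: one needs Young's inequality applied with the right weights so that the resulting coefficient in front of $\mathcal{A}(t)$ stays strictly less than $1$, and one must recognize that the $-\|\dt v\|_{L^2}^2$ term coming from the Lyapunov dissipation is precisely what allows the linear-in-$\|\dt v\|_{L^2}$ term to be absorbed without any additional assumption.
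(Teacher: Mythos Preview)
Your proposal is correct and follows essentially the same approach as the paper: expand the dissipation square, apply Lemma~\ref{lem_keyest}, absorb the $\sqrt{\mathcal A}$ and $\|\dt v\|$ terms by Young's inequality, and then integrate against the exponential weight. The only cosmetic difference is that the paper adds $L(u,v)$ to both sides of the differential inequality before integrating (so that the left-hand side becomes $\frac{d}{ds}(e^s L)$), whereas you keep $-\frac{d}{ds}L$ on the right and integrate by parts afterwards; these are the same manipulation written two ways, and both rely on the two-sided bound~\eqref{lowerbound_L}.
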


\begin{proof}
The classical Lyapunov functional identity can be rewritten as 
\begin{align*}
\frac{d}{dt} L(u,v) +\int_0^1 |\partial_t v|^2 \, dx 
+ \int_0^1 \left( u \left| \frac{a(u)}{u} \partial_x u \right|^2 + u\left|\partial_x v \right|^2 \right) \, dx
=  2 \int_0^1 a(u) \partial_x u \cdot \partial_x v  \, dx.
\end{align*}
Applying Lemma \ref{lem_keyest} we see that
\begin{align*}
&\quad \frac{d}{dt} L(u,v) + \|\dt v\|_{L^2(0,1)}^2 +\left\| \frac{a(u)}{\sqrt{u}} \dx u \right\|_{L^2(0,1)}^2 
+  \io u\left|\partial_x v \right|^2\,dx  \\
&\leq  C \left(\|\dt v\|_{L^2(0,1)} + \left\| \frac{a(u)}{\sqrt{u}} \dx u \right\|_{L^2(0,1)} +1 \right)
\end{align*}
with some $C=C(M, \left\|v_0\right\|_{L^2})>0$ and then
\begin{align*}
\frac{d}{dt} L(u,v) +\frac{1}{2} \|\dt v\|_{L^2(0,1)}^2 +\frac{1}{2} \left\| \frac{a(u)}{\sqrt{u}} \dx u \right\|_{L^2(0,1)}^2 
+  \io u\left|\partial_x v \right|^2\,dx 
\leq C^\prime.
\end{align*}
With the boundedness of the functional $L$ (see \eqref{lowerbound_L}), we have that
\begin{align*}
\frac{d}{dt} L(u,v) + L(u,v)+\frac{1}{2} \|\dt v\|_{L^2(0,1)}^2 +&\frac{1}{2} \left\| \frac{a(u)}{\sqrt{u}} \dx u \right\|_{L^2(0,1)}^2 
+  \io u\left|\partial_x v \right|^2\,dx \\ 
& \leq C^\prime+ L(u,v) \leq C'',
\end{align*}
hence for any $t>0$
\begin{align*}
L(u(t),v(t))+\frac{1}{2}\int_0^t e^{s-t} \io \left( \frac{(a(u))^2}{u}|\dx u|^2 + u|\dx v|^2 \right) \, dx\, ds
\leq e^{-t}L(u_0,v_0) +C'',
\end{align*}
which completes the proof.
\end{proof}

In view of \eqref{odciecie}, we can pick up a small $t_0>0$, such that $u(t_0,x)>0$ for any $x\in[0,1]$.

We are now in a position to complete the time-independent estimate of ${\cal F}$.

\begin{lem}\label{Lem:UpperBoundOfF}
Let (A0) and (A1) be satisfied. Assume that $(u,v)$ is a classical solution of \eqref{eq}. Then there is a constant $C_1(u_0, v_0) > 0$ such that
$$
{\cal F}(u(t)) \leq {\cal F}(u(t_0)) + C_1(u_0, v_0) \quad \mbox{for all} \ t \in (t_0,\infty).
$$
\end{lem}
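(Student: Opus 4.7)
The strategy is to add a copy of $\mathcal{F}(u)$ to the differential identity of Lemma \ref{lemma_Lyap1} so that the integrating-factor trick with $e^t$ produces a time-independent bound. Starting from Lemma \ref{lemma_Lyap1} and discarding the non-negative dissipation $\mathcal{D}(u,v)\geq 0$, one obtains
$$
\frac{d}{dt}\mathcal{F}(u) + \mathcal{F}(u) \leq \int_0^1 \frac{a(u)u(v+\partial_t v)^2}{4}\,dx + \mathcal{F}(u).
$$
Multiplying by $e^t$, integrating from $t_0$ to $t$, and dividing by $e^t$ yields
$$
\mathcal{F}(u(t)) \leq e^{t_0-t}\mathcal{F}(u(t_0)) + \frac{1}{4}\int_{t_0}^t e^{s-t}\int_0^1 a(u)u(v+\partial_t v)^2\,dx\,ds + \int_{t_0}^t e^{s-t}\mathcal{F}(u(s))\,ds,
$$
so the task reduces to bounding each of the two integrals on the right uniformly in $t$.

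The first integral is handled immediately. By (A1) we have $a(u)u\leq\alpha$, and the elementary estimate $(v+\partial_t v)^2\leq 2v^2+2(\partial_t v)^2$ dominates its integrand by $\tfrac{\alpha}{2}(v^2+(\partial_t v)^2)$. The $v^2$-term is uniformly bounded in $x,t$ by Lemma \ref{oszv}, while $\int_{t_0}^t e^{s-t}\,ds\leq 1$; the $(\partial_t v)^2$-term is controlled by \eqref{v-t} since the weight $e^{s-t}$ is at most $1$.

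The main step is the second integral, for which I would first estimate $\mathcal{F}(u(s))$ pointwise in time. The lower-order piece $-\int_0^1 u\,\Lambda(u)\,dx$ is dominated by $\int_0^1 u|\Lambda(u)|\,dx\leq M\|\Lambda(u)\|_{L^\infty}$, and, as in the proof of Proposition \ref{lem_est_uLambda}, the one-dimensional embedding $W^{1,1}\hookrightarrow L^\infty$ yields $M\|\Lambda(u)\|_{L^\infty}\leq cM(M+1)+M^{3/2}\|a(u)\partial_x u/\sqrt{u}\|_{L^2}$. Applying Young's inequality $M^{3/2}X\leq \tfrac12 X^2+\tfrac{M^3}{2}$ with $X=\|a(u)\partial_x u/\sqrt{u}\|_{L^2}$ absorbs the gradient factor into the leading term of $\mathcal{F}$, giving
$$
\mathcal{F}(u(s)) \leq \int_0^1 \frac{(a(u))^2}{u}|\partial_x u|^2\,dx + C_M
$$
for some $C_M>0$ depending only on $M$. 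Integrating against $e^{s-t}$ over $[t_0,t]$ and invoking Lemma \ref{lem_Keyest2} then produces a uniform-in-$t$ bound, and collecting all constants yields the required $C_1(u_0,v_0)$. The chief obstacle to watch is the sign ambiguity of $\Lambda(u)$ at $u=1$, which prevents a direct lower bound on $\int_0^1 u\,\Lambda(u)\,dx$; the resolution is to pass to absolute values and re-run the $W^{1,1}\hookrightarrow L^\infty$ argument, which is exactly what makes the Young-type absorption into the dissipation available.
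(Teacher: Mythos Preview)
Your proposal is correct and follows essentially the same approach as the paper: add $\mathcal{F}$ to the identity of Lemma~\ref{lemma_Lyap1}, handle the $(v+\partial_t v)^2$ term via (A1), Lemma~\ref{oszv} and \eqref{v-t}, and bound the weighted time integral of $\mathcal{F}$ by combining Proposition~\ref{lem_est_uLambda} with Lemma~\ref{lem_Keyest2}. The only cosmetic difference is that you apply Young's inequality pointwise in time before integrating, whereas the paper splits $\int_0^t e^{s-t}\mathcal{F}$ into the gradient part and the $u\Lambda(u)$ part and applies Young after integrating; the two orderings are equivalent.
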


\begin{proof}
Observe that
\begin{align*}
\int_0^t e^{s-t}  {\cal F}(u(s))\,ds &\leq \int_0^t e^{s-t} \left| \frac{1}{2} \int_0^1 \frac{(a(u(s)))^2}{u} | \partial_x u |^2 \, dx \right| \, ds + \int_0^t e^{s-t} \left| \int_0^1 u(s) \int_{1}^{u(s)} a(r) \, dr \, dx \right| \, ds \\
&=: I_1 + I_2.
\end{align*}
By Lemma \ref{lem_Keyest2} we get
\begin{equation}\label{I1}
I_1 \leq C(u_0, v_0).
\end{equation}
In view of Proposition \ref{lem_est_uLambda} and Lemma \ref{lem_Keyest2} we have
\begin{align}\label{I2}
I_2 &\leq \int_0^t e^{s-t} \left( cM(M+1) + M^{3/2} \left\| \frac{a(u)}{\sqrt{u}} \partial_x u \right\|_{L^2 (0,1)} \right) \, ds\nonumber \\ &= cM(M+1) \int_0^t e^{s-t} \, ds + M^{3/2} \int_0^t e^{s-t} \left\| \frac{a(u)}{\sqrt{u}} \partial_x u \right\|_{L^2 (0,1)} \, ds  \nonumber \\
\nn
&\leq  cM(M+1) \left( 1 - e^{-t} \right) 
+ M^{3/2} \int_0^t e^{s-t} \left( \frac{1}{2} \left\| \frac{a(u)}{\sqrt{u}} \partial_x u \right\|_{L^2 (0,1)}^2+ \frac{1}{2} \right) \, ds \\ 
&\leq  cM(M+1) + M^{3/2} C(u_0, v_0) +\frac{1}{2} M^{3/2}. 
\end{align}
Combining \eqref{I1} and \eqref{I2} we have some $C>0$ such that
\begin{align}\label{bound_intF}
\int_0^t e^{s-t}  {\cal F}(u(s))\,ds \leq C \quad \mbox{for all }t>0.
\end{align}
Adding $\mathcal{F}$ to \eqref{wazne} we have
\begin{align*}
\frac{d}{dt} {\cal F} (u) +{\cal F} (u)+ {\cal D}(u,v) = \int_0^1 \frac{a(u)u (v + \partial_t v)^2}{4} \, dx +{\cal F} (u).
\end{align*}
By (A1) and Lemma \ref{oszv} it follows that
\begin{align*}
\frac{d}{dt} {\cal F} (u) +{\cal F} (u)+ {\cal D}(u,v) 
\leq \frac{\alpha}{2}  \int_0^1 (v^2 +|\partial_t v|^2) \, dx +{\cal F} (u)
\leq  C +\frac{\alpha}{2}   \io |\dt v|^2  +{\cal F} (u).
\end{align*}
Using \eqref{v-t} and \eqref{bound_intF} we have
\begin{align*}
&{\cal F} (u(t)) + \int_{t_0}^t e^{s-t} {\cal D}(u(s),v(s))\,ds\\
&\leq {\cal F}(u(t_0)) +  C+ \frac{\alpha}{2} \int_0^t e^{s-t} \io |\dt v(s)|^2\,ds  + \int_0^t e^{s-t} {\cal F} (u(s))\,ds\\
&\leq {\cal F}(u(t_0)) +C_1,
\end{align*}
which completes the proof.
\end{proof}

\section{Improved regularity}

Once we have the time-independent bound of ${\cal F}$, we can achieve the time-independent bounds of $u$ ending up with the $L^\infty$ estimate. All the procedure is different than the one used in \cite{CF1}. It is self-contained, seems to be smarter than the one in \cite{CF1} and one concludes the claim of Theorem \ref{ThMain} directly from the bounds obtained via Lyapunov-like functional ${\cal F}$. We do not need any bootstrap procedure to increase the integrability of $u$. Instead, we directly arrive at $L^\infty$ estimate of the solution.

\begin{lem}\label{Lem:LowerBoundOfF}
Let $(u,v)$ solve \eqref{eq}. Then there is $C_2 = C_2 (M)> 0$ such that
$$
{\cal F} (u) \geq \frac{1}{4} \int_0^1 \frac{(a(u))^2}{u} | \partial_x u|^2 \, dx - C_2.
$$
\end{lem}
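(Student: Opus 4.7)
The plan is to exploit Proposition \ref{lem_est_uLambda}, which already bounds the ``bad'' negative term in $\mathcal{F}$, and then absorb the resulting first-order seminorm into the quadratic energy term of $\mathcal{F}$ via Young's inequality. The whole argument is purely algebraic and does not require the PDE structure.

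First, I would write out the definition of $\mathcal{F}$ explicitly, so the task becomes proving
\begin{equation*}
\int_0^1 u \int_1^u a(r)\, dr\, dx \;\leq\; \frac{1}{4}\int_0^1 \frac{(a(u))^2}{u}|\partial_x u|^2\, dx + C_2(M).
\end{equation*}
Then I would apply Proposition \ref{lem_est_uLambda} directly to the left-hand side to obtain
\begin{equation*}
\int_0^1 u \int_1^u a(r)\, dr\, dx \;\leq\; cM(M+1) + M^{3/2}\left\|\frac{a(u)}{\sqrt{u}}\partial_x u\right\|_{L^2(0,1)}.
\end{equation*}

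Next, I would apply Young's inequality $2ab \leq a^2 + b^2$ with $a = M^{3/2}$ and $b = \tfrac{1}{2}\|\frac{a(u)}{\sqrt{u}}\partial_x u\|_{L^2(0,1)}$ (or equivalently the weighted Young inequality) to split the mixed term, yielding
\begin{equation*}
M^{3/2}\left\|\frac{a(u)}{\sqrt{u}}\partial_x u\right\|_{L^2(0,1)} \;\leq\; M^{3} + \frac{1}{4}\left\|\frac{a(u)}{\sqrt{u}}\partial_x u\right\|_{L^2(0,1)}^{2}.
\end{equation*}
Combining the two displayed inequalities and setting $C_2(M) := cM(M+1) + M^{3}$ gives the claim after subtracting $\tfrac{1}{4}\int_0^1 \tfrac{(a(u))^2}{u}|\partial_x u|^2\,dx$ from the $\tfrac{1}{2}$ coefficient in the definition of $\mathcal{F}$.

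There is really no hard step here; the substantive work has already been done in Proposition \ref{lem_est_uLambda}. The only thing to watch is that the $M^{3/2}$ factor has to be absorbed in a way that produces a constant depending on $M$ alone (not on $\|v_0\|$ or on $t$), which Young's inequality delivers immediately because it converts $M^{3/2}\cdot X$ into $\tfrac14 X^2 + M^3$.
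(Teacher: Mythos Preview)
Your proof is correct and essentially identical to the paper's: both invoke Proposition~\ref{lem_est_uLambda} and then absorb the term $M^{3/2}\bigl\|\tfrac{a(u)}{\sqrt{u}}\partial_x u\bigr\|_{L^2}$ into the remaining $\tfrac14$ of the quadratic energy, arriving at the same constant $C_2 = cM(M+1)+M^3$. The only cosmetic difference is that the paper completes the square $\tfrac14 X^2 - M^{3/2}X = \tfrac14(X-2M^{3/2})^2 - M^3$ instead of citing Young's inequality.
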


\begin{proof}
Proposition \ref{lem_est_uLambda} implies that
\begin{align*}
{\cal F}(u) &\geq \frac{1}{4} \int_0^1 \frac{(a(u))^2 | \partial_x u|^2}{u} \, dx + \frac{1}{4} \int_0^1 \frac{(a(u))^2 | \partial_x u|^2}{u} \, dx - M^{3/2} \left( \int_0^1 \frac{(a(u))^2}{u} |\partial_x u|^2 \, dx \right)^{1/2} - c  M(M+1) \\
&\geq \frac{1}{4} \int_0^1 \frac{(a(u))^2 | \partial_x u|^2}{u} \, dx + \frac{1}{4} \left( \left( \int_0^1 \frac{(a(u))^2 | \partial_x u|^2}{u} \, dx \right)^{1/2} - 2 M^{3/2} \right)^2 - M^3 - c M(M+1) \\
&\geq \frac{1}{4} \int_0^1 \frac{(a(u))^2 | \partial_x u|^2}{u} \, dx - M^3 - c M(M+1),
\end{align*}
which completes the proof.
\end{proof}

From Lemma \ref{Lem:UpperBoundOfF} and Lemma \ref{Lem:LowerBoundOfF} we obtain 
$$
\frac{1}{4} \int_0^1 \frac{(a(u))^2}{u} |\partial_x u|^2 \, dx \leq {\cal F}(u) + C_2(M) \leq {\cal F}(u(t_0)) + C_1 + C_2(M).
$$
Hence
\begin{equation}\label{ineq}
\int_0^1 \frac{(a(u))^2}{u} |\partial_x u|^2 \, dx \leq C \quad \mbox{for any} \ t \in (t_0,\infty).
\end{equation}
Although we do not need it in the proof, we notice that the following inequality holds. It gives the superlinear growth of $u$. 
\begin{align*}
\int_0^1 u \int_1^u a(r) \, dr \, dx + \int_0^1 \frac{(a(u))^2}{u} |\partial_x u|^2 \, dx &= \frac{3}{2} \int_0^1 \frac{(a(u))^2}{u} |\partial_x u|^2 \, dx - {\cal F}(u) \\
&\leq \frac{3}{2} \int_0^1 \frac{(a(u))^2}{u} |\partial_x u|^2 \, dx - \frac{1}{4} \int_0^1 \frac{(a(u))^2}{u} |\partial_x u|^2 \, dx + C_2 \\
&= \frac{5}{4} \int_0^1 \frac{(a(u))^2}{u} |\partial_x u|^2 \, dx + C_2 \leq \frac{5}{4} C + C_2.
\end{align*}

We have already collected all the necessary facts for proving the main Theorem \ref{ThMain}. Below we conclude the proof. What we need is just \eqref{ineq}, mass conservation, (A0), (A1) and (A2).

\begin{proof}[Proof of Theorem \ref{ThMain}]
Observe that, by the mass conservation
\[
\int_0^1 \left|\left(\int_1^u a(s) \, ds\right)_x\right|\,dx\leq \left(\int_0^1 \frac{(a(u))^2 |\partial_x u|^2}{u} \,dx\right)^{1/2} M^{1/2},
\]
so that by \eqref{ineq} and \eqref{pr2}, we conclude that there exists $C>0$ such that
\begin{equation}\label{pr3}
\left\|\Lambda u(t, \cdot)\right\|_{W^{1,1}(0,1)}\leq C,
\end{equation}
where $\Lambda$ is given by \eqref{Lambda}. By the one-dimensional Sobolev embedding there exists $C>0$ such that 
\begin{equation}\label{pr4}
|\Lambda(u(t,\cdot))|\leq C.
\end{equation}

Notice that 
\begin{equation}\label{koniec}
|\Lambda(r)|\rightarrow \infty\;\;\mbox{if and only if}\;\;r\rightarrow \infty. 
\end{equation}
Indeed, in view of (A2), we see that $\Lambda(r)\rightarrow \infty$ for $r\rightarrow \infty$. Moreover, positivity of $a$ and (A0) imply that $\Lambda$ is bounded for $r\in[0,R]$ and any $R<\infty$. 

To complete the proof we only remark that \eqref{pr4} and \eqref{koniec} imply time-uniform boundedness of $u$.  
\end{proof}

\section{Parabolic-elliptic case}

In this section we show that our method can be applied also for the parabolic-elliptic Keller-Segel system with critical diffusion. We consider the parabolic-elliptic system of equations
\begin{eqnarray}
\label{eq-elliptic}
\left\{ \begin{array}{ll}
\partial_t u = \partial_x \left( a(u) \partial_x u - u \partial_x v \right), \ & \ \mathrm{in} \ (0,\infty) \times (0,1), \\
0 = \partial_{x}^2 v -  v + u, \  & \ \mathrm{in} \ (0,\infty) \times (0,1), \\
u(0,x) = u_0 (x), \ & \ \mathrm{for} \ x \in (0,1)
\end{array} \right.
\end{eqnarray}
under the Neumann boundary condition
$$
\partial_x u = \partial_x v = 0 \quad \quad \mathrm{on} \ (0,\infty) \times \partial (0,1) = (0,\infty) \times \{0, 1\}.
$$
We assume that the initial data $u_0 \in W^{1,\infty}(0,1)$ is a nonnegative function.
Recall that the global existence of solutions has been shown in \cite{CF2}. One also obtains in a straightforward way a claim like in Lemma \ref{oszv}. We will show the boundedness of this solution. Let us mention that such a result in a parabolic-elliptic version of Keller-Segel system, however only in its J\"{a}ger-Luckhaus type version, was found already in \cite{cl3} (see its extension in \cite{TCPhL1}). As we already mentioned in the introduction, a method used in \cite{cl3} does not seem to be extendible even to the usual parabolic-elliptic case of \eqref{eq-elliptic}. 

\begin{theo}\label{ThMain2}
Assume that (A0)-(A2) hold. The solution $(u,v)$ of \eqref{eq-elliptic} is bounded.
\end{theo}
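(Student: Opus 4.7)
The plan is to mirror the parabolic-parabolic strategy of Sections 3 and 4, exploiting the structural observation that the elliptic equation rearranges into $v = \partial_x^2 v + u$, i.e.\ precisely the relation $v + \partial_t v = \partial_x^2 v + u$ from the parabolic-parabolic case with $\partial_t v$ formally set to zero. Consequently the derivation of Lemma \ref{lemma_Lyap1} specialises, without any new computation, to the identity
$$
\frac{d}{dt}{\cal F}(u) + {\cal D}_{\mathrm{e}}(u,v) = \int_0^1 \frac{a(u)\,u\,v^2}{4}\, dx,
$$
with ${\cal D}_{\mathrm{e}}(u,v) := \int_0^1 u\,a(u)\bigl|\partial_x\bigl(\tfrac{a(u)}{u}\partial_x u\bigr) - \partial_x^2 v + \tfrac{v}{2}\bigr|^2\, dx$. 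The right-hand side is controlled by $\tfrac{\alpha}{4}\|v\|_{L^2(0,1)}^2$ via (A1), and elliptic regularity applied to $-\partial_x^2 v + v = u$ delivers, together with mass conservation, the parabolic-elliptic analogue of Lemma \ref{oszv}.

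For the classical Lyapunov functional, the absence of $\partial_t v$ in the second equation yields the cleaner identity $\frac{d}{dt}L(u,v) = -\int_0^1 u\bigl|\tfrac{a(u)}{u}\partial_x u - \partial_x v\bigr|^2\, dx$. Expanding the square and using $\partial_x^2 v = v - u$, the cross term rewrites as $\int_0^1 a(u)\partial_x u\,\partial_x v\, dx = -\int_0^1 \Lambda(u)(v - u)\, dx$, which is the elliptic analogue of \eqref{ineq1} with the $\partial_t v$ contribution simply absent. Propositions \ref{lem_est_Lambda} and \ref{lem_est_uLambda} then dominate this by a constant plus $\|\tfrac{a(u)}{\sqrt{u}}\partial_x u\|_{L^2(0,1)}$, and after absorbing the latter by Young's inequality and exploiting boundedness of $L$ (which follows as in \eqref{lowerbound_L}) I obtain the elliptic counterpart of Lemma \ref{lem_Keyest2}:
$$
\int_0^t e^{s-t}\int_0^1 \frac{(a(u))^2}{u}|\partial_x u|^2\, dx\, ds \leq C.
$$

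Adding ${\cal F}(u)$ to the Lyapunov-like identity above and controlling $\int_0^t e^{s-t}{\cal F}(u(s))\, ds$ through Proposition \ref{lem_est_uLambda} exactly as in the proof of Lemma \ref{Lem:UpperBoundOfF} then produces a time-independent upper bound ${\cal F}(u(t)) \leq C$. The algebraic lower bound in Lemma \ref{Lem:LowerBoundOfF} transfers without modification, so $\int_0^1 \tfrac{(a(u))^2}{u}|\partial_x u|^2\, dx \leq C$ uniformly in $t > t_0$, and the endgame of Section 4 applies verbatim: mass conservation together with Proposition \ref{lem_est_Lambda} provide $\|\Lambda(u(t,\cdot))\|_{W^{1,1}(0,1)} \leq C$, the one-dimensional Sobolev embedding upgrades this to $L^\infty$, and property (A2) converts the uniform $L^\infty$-bound on $\Lambda(u)$ into the desired uniform bound on $u$.

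The main obstacle is really only bookkeeping rather than new estimation: one has to verify that the derivation of \eqref{wazne} in \cite{CF1} uses the parabolic second equation of \eqref{eq} solely through the relation $v + \partial_t v = \partial_x^2 v + u$, so that formally replacing $\partial_t v$ by $0$ preserves every intermediate integration by parts. Once this is checked, every remaining step becomes a strict simplification of its parabolic-parabolic counterpart.
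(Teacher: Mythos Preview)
Your proposal is correct and follows essentially the same route as the paper: the paper likewise observes that Propositions \ref{lem_est_Lambda} and \ref{lem_est_uLambda} carry over unchanged, that the cross term in the Lyapunov identity becomes $-\int_0^1 \Lambda(u)(v-u)\,dx$, that the Lyapunov-like identity specialises to $\frac{d}{dt}\mathcal{F}(u)+\mathcal{D}_1(u,v)=\int_0^1 \tfrac{ua(u)v^2}{4}\,dx$ (quoted from \cite{CF2} rather than re-derived), and that Lemmas \ref{Lem:UpperBoundOfF}, \ref{Lem:LowerBoundOfF} and the endgame then repeat verbatim. Your explicit remark that the parabolic derivation uses the second equation only through $v+\partial_t v=\partial_x^2 v+u$ is exactly the structural point underlying the paper's sketch.
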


First, we observe that Propositions \ref{lem_est_Lambda} and \ref{lem_est_uLambda} hold in the parabolic-elliptic case. To show the statement of Lemma \ref{lem_keyest} we observe that
$$
\int_0^1 a(u) \partial_x u \cdot \partial_x v \, dx = - \int_0^1 \Lambda(u) (v - u)
$$
and the remaining arguments are the same. To show Lemma \ref{lem_Keyest2} we see that in the parabolic-elliptic case
$$
\frac{d}{dt} L(u,v) = - \int_0^1 u \left| \frac{a(u)}{u} \partial_x u - \partial_x v \right|^2 \, dx.
$$

Then, instead of Lemma \ref{lemma_Lyap1}, we have the following proposition, originally proven in \cite{CF2}. 

\begin{prop}\label{prop:par-ell}
Let $(u,v)$ be a solution of \eqref{eq-elliptic} in $(0,\infty) \times (0,1)$. The following identity holds
$$
\frac{d}{dt} \mathcal{F}(u) + \mathcal{D}_1 (u,v) = \int_0^1 \frac{ua(u)v^2}{4} \, dx,
$$
where 
$$
{\cal D}_1 (u,v) := \int_0^1 ua(u) \left| \partial_x \left( \frac{a(u)}{u} \partial_x u \right) - \partial_x^2 v + \frac{v}{2} \right|^2 \, dx
$$
and $\mathcal{F}$ is given by \eqref{F}.
\end{prop}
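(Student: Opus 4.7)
The identity is the parabolic-elliptic counterpart of Lemma \ref{lemma_Lyap1}, and was already established in \cite{CF2}, so one option is simply to quote that reference. For a self-contained proof, the plan is to rerun the derivation of Lemma \ref{lemma_Lyap1} from \cite{CF1}, noting that the only structural change is that $v$ is now determined algebraically from $u$ via the elliptic relation $u = v - \partial_x^2 v$, so no $\partial_t v$ term ever enters the computation.

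Concretely, I would first differentiate $\mathcal{F}(u)$ in time using only the first equation of \eqref{eq-elliptic}. Writing $w := \tfrac{a(u)}{u}\partial_x u$ and $\Phi(u) := \int_1^u a(r)\,dr$, each of the two integrals in \eqref{F} is differentiated under the integral sign; for the first one, commuting $\partial_t$ and $\partial_x$ gives $\partial_t w = \partial_x\bigl(\tfrac{a(u)}{u}\partial_t u\bigr)$, and an integration by parts (with all boundary contributions dropping out since $\partial_x u = \partial_x v = 0$ at $x=0,1$) yields
\begin{equation*}
\frac{d}{dt}\mathcal{F}(u) \;=\; -\int_0^1 \Bigl[\tfrac{1}{2} w^2 + a(u)\partial_x w + \Phi(u) + u a(u)\Bigr]\partial_t u \, dx.
\end{equation*}

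Next, I would substitute $\partial_t u = \partial_x\bigl(u(w-\partial_x v)\bigr)$ from the first equation, integrate by parts once more (again using the Neumann condition to kill boundary terms), and use the algebraic identity $u = v - \partial_x^2 v$ coming from the elliptic equation to replace any surviving bare factors of $u$. The resulting integrand is then reassembled by completing the square in the quantity $\partial_x w - \partial_x^2 v + v/2$: the perfect-square piece is exactly $u a(u)\bigl[\partial_x w - \partial_x^2 v + v/2\bigr]^2$, i.e.\ $\mathcal{D}_1(u,v)$, and the leftover cross term collapses, after use of $u = v-\partial_x^2 v$ once more, to $\tfrac{1}{4}\int_0^1 u a(u)v^2\,dx$.

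The main obstacle is the bookkeeping in the integration-by-parts manipulations and the correct distribution of terms needed to recognise the perfect square. Conceptually, however, no new idea is required beyond the parabolic-parabolic case: formally setting $\partial_t v \equiv 0$ throughout the proof of Lemma \ref{lemma_Lyap1} produces precisely the announced identity, because the only step in that proof where $\partial_t v$ appears is the invocation of the parabolic $v$-equation used to re-express $u$.
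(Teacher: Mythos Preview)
Your proposal is correct and matches the paper's own treatment: the paper does not give a self-contained proof of this proposition but simply records that it was established in \cite{CF2}. Your additional sketch---differentiating $\mathcal{F}$, integrating by parts, and observing that the derivation of Lemma \ref{lemma_Lyap1} carries over verbatim once the parabolic relation $u=\partial_t v+v-\partial_x^2 v$ is replaced by the elliptic one $u=v-\partial_x^2 v$ (equivalently, setting $\partial_t v\equiv 0$)---is exactly the right heuristic and goes a step beyond what the paper provides.
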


Finally, using Proposition \ref{prop:par-ell} we are able to repeat the proof of Lemma \ref{Lem:UpperBoundOfF}, Lemma \ref{Lem:LowerBoundOfF} and the arguments from the proof of Theorem \ref{ThMain} to show Theorem \ref{ThMain2}.

\vspace{0.3cm}
\noindent
\textbf{Acknowledgement.}
Bartosz Bieganowski started working on this project during his WCMCS PhD internship at Institute of Mathematics of the Polish Academy of Sciences under supervision of Tomasz Cie\'slak, he wishes to thank for the invitation, support and warm hospitality. 
Kentarou Fujie is supported by Grant-in-Aid for Research Activity start-up (No.\,17H07131), Japan Society for the Promotion of Science. 
Takasi Senba is supported by Grant-in-Aid for Scientific Research (C) 
(No.\, 26400172), Japan Society for the Promotion of Science.


\begin{thebibliography}{20}


\bibitem{BCMR} 
	\sc J. Burczak, T. Cie\'slak, C. Morales-Rodrigo,
	\it Global existence vs. blowup in a fully parabolic quasilinear 1D Keller-Segel system.
	\rm Nonlinear Anal. {\bf 75}, 5215--5228 (2012).
\bibitem{bg}
	\sc J. Burczak, R. Granero-Belinchon,
	\it Critical Keller-Segel meets Burgers on $S^1$:large-time smooth solutions.
	\rm Nonlinearity {\bf 29}, 3810--3836 (2016).
\bibitem{CF1} 
	\sc T. Cie\'slak, K. Fujie,
	\it No critical nonlinear diffusion in 1D quasilinear fully parabolic chemotaxis system.
	\rm { Proc.\ Amer.\ Math.\ Soc.}, {\bf 146},  2529--2540 (2018).
\bibitem{CF2}
	\sc T. Cie\'slak, K. Fujie,
	\it Global existence in the 1D quasilinear parabolic-elliptic chemotaxis system with critical nonlinearity.
	\rm arXiV:1705.08541 
\bibitem{cl3}
	\sc T. Cie\'slak, Ph. Lauren\c cot, 
	\it Looking for critical nonlinearity in the one-dimensional quasilinear Smoluchowski-Poisson system.
	\rm Discrete Contin. Dyn. Syst. {\bf 26}, 417--430 (2010).
\bibitem{TCPhL}
	\sc T. Cie\'slak, Ph. Lauren\c cot,
	\it Finite-time blowup for a one-dimensional quasilinear parabolic-parabolic chemotaxis system.
	\rm Ann. Inst. H. Poincar\'e Anal. Non Lin\'eaire {\bf 27}, 437-446 (2010).
\bibitem{TCPhL1}
	\sc T. Cie\'slak, Ph. Lauren\c cot,
	\it Global existence vs. blowup for the one dimensional quasilinear Smoluchowski-Poisson system.
	\rm	Progress in Nonlinear Differential Equations Their Appl. vol. 80. Parabolic Problems 2011.
\bibitem{TCCS}
	\sc T. Cie\'slak, C. Stinner,
	\it New critical exponents in a fully parabolic quasilinear Keller-Segel and applications to volume filling models. 
	\rm J. Differential Equations {\bf 258}, 2080-2113 (2015).
\bibitem{lau_miz}
	\sc Ph. Lauren\c{c}ot, N. Mizoguchi,
	\it Finite time blowup for the parabolic-parabolic Keller-Segel system with critical diffusion.
	\rm Ann. Inst. H. Poincar\'e Anal. Non Lin\'eaire {\bf 34}, 197-220 (2017).
\bibitem{MW}
	\sc N. Mizoguchi, M. Winkler,
	\it Finite-time blow-up in the two-dimensional Keller-Segel system.
	\rm	preprint.
\bibitem{NSY}
	\sc T.~Nagai, T.~Senba, K.~Yoshida,
	\it Application of the Trudinger-Moser inequality to a parabolic system of chemotaxis.
	\rm Funkc. Ekvacioj {\bf 40}, 411-433 (1997).
\bibitem{tao_winkler}
	\sc Y. Tao, M. Winkler,
	\it Boundedness in a quasilinear parabolic-parabolic Keller-Segel system 
  with subcritical sensitivity.
  \rm J. Differential Equations {\bf 252} (1), 692-715 (2012).	
\end{thebibliography}
\end{document}